\newtheorem{theorem}{Theorem}
\newtheorem{lemma}[theorem]{Lemma}
\newcommand{\R}{\mathbb R}
\begin{document}

\author{Vincent E. Coll, Jr. and Lee B. Whitt}

\title{The Flat Plane\\ and a\\Constructive Proof of Minding's Theorem}

\maketitle

\bigskip \date\!\!{} {\noindent \textit{Department of Mathematics, Lehigh University, Bethlehem, PA 18015\\ Northrop Grumman - Senior Technical Fellow \rm{(}$retired$\rm{)}, $San~Diego, CA~92014$}

\section{Introduction} 

The simplest example of a Riemannian manifold is the Cartesian plane with the standard Euclidean (flat) plane, which we here denote by $g_0$. From the Cartan-Hadamard theorem, we know that the Euclidean metric is unique in that every complete flat metric on $\R^2$ is isometric to $g_0$. By restricting $g_0$ to proper open sets in the plane, one obtains an inexhaustible supply of incomplete metrics. These are examples of what we call \textit{subset metrics}.  That is, metrics obtained by the restriction of the Euclidean metric to a subset of the plane or more generally by an isometric embedding to a subset.  For example, subset metrics can also be constructed on the entire plane as a product metric like  
$e^{-2x^2} dx^2 + e^{-2y^2} dy^2$.  In this case, we have  a subset metric on $\R^2$ which can be ``realized" as an isometric embedding into $\R^2$ as an open square with sides of length $\pi$.  However, the plane admits interesting flat metrics which are both natural and incomplete -- but which are decidedly not subset metrics.  

\section{Minding's theorem - a constructive proof}

Ferdinand Minding (1806-1885) was an influential German-Russian mathematician best known for his contributions to the differential geometry of surfaces of constant curvature.  Minding's results on the geometry of geodesic triangles on a surface of constant curvature (1840) anticipated Beltrami's approach to the foundations of non-Euclidean geometry (1868) and his work in this area brilliantly continued the investigations of Gauss who had pioneered the intrinsic notion of curvature in his remarkable Theorem Egregium (1828).  

Minding's most celebrated result is his namesake theorem of 1839 which established that all surfaces having the same constant curvature must be locally isometric. Today,  Minding's theorem is a staple in differential geometry textbooks. But, to the best of our knowledge, all published proofs of it, inclusive of Minding's original argument\footnote{Unfortunately, Minding's paper is only available in the original German \textbf{\cite{Minding}} or in a Russian translation.}, are existential in nature.  In this section, we give a constructive proof of Minding's theorem in the flat case.  The proof requires only some basic facts about harmonic functions and complex analytic functions.

\bigskip
Two metrics $g_0$ and $g_{\varphi}$ on a 
surface $S$ are \textit{conformally equivalent} if $g_{\varphi}=e^{2\varphi} g_0$ where $\varphi$ is a smooth real-valued function on $S$. 
The function $e^{2\varphi}$ is called the \textit{conformal factor}.  The curvature $K_0$ of the initial metric $g_0$ and the curvature $K_{\varphi}$ of the conformally changed metric $g_{\varphi}$, are related by the well-known equation involving $\Delta_0 \varphi$, the Laplacian on $(S,g_0)$:

\begin{eqnarray}\label{KW}
K_{\varphi}=e^{2\varphi}(-\Delta_{0} \varphi + K_0).
\end{eqnarray}

\noindent
Here, we are concerned with $S=\R^2$ and we will take $g_0$ to be the standard flat metric on $\R^2$.  Evidently, $K_0\equiv 0$ and it is an easy exercise to show that $K_\varphi$ is also identically zero.
Equation (\ref{KW}) then implies that $\Delta_0 \varphi=\Delta \varphi$ (the standard Laplacian) is zero.  Maps with vanishing Laplacian are called \textit{harmonic} and appear in profusion.\footnote{Note that equation (2) establishes that the set of harmonic functions is unaltered by a conformal change of metric.} The real and imaginary parts of any complex analytic function are harmonic.  And, as we will see in the following lemma, if $\varphi$ is a harmonic function on $\R^2$ and $\nabla \varphi$ is its gradient, then $\log (||\nabla \varphi||_0)$ is also harmonic.
(Here, $||~~||_0$ refers to length defined by the metric $g_0$.)

\begin{lemma} \label{firstlemma}

If $\mathscr{U}\subseteq \R^2$ is a simply connected open set and $F:\mathscr{U} \rightarrow \R$ is a harmonic function, then there exists a simply connected open set $\mathscr{V}\subseteq \mathscr{U}$ and a harmonic function $G:\mathscr{V}\rightarrow\R$ such that

$$F= G + \log(\| \nabla G \|) . $$
\end{lemma}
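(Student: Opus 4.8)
The plan is to pass to holomorphic functions, where the equation for $G$ collapses to an elementary first-order ODE. Since $\mathscr{U}$ is simply connected and $F$ is harmonic, I would first write $F=\operatorname{Re} f$ for some holomorphic $f$ on $\mathscr{U}$. The point is that for \emph{any} harmonic $G=\operatorname{Re} h$ with $h$ holomorphic, the Cauchy--Riemann equations give $|h'|=\|\nabla G\|$, so that $\log\|\nabla G\|=\log|h'|=\operatorname{Re}\!\big(\log h'\big)$ at points where $h'\neq 0$. Hence it suffices to produce a holomorphic $h$ solving $h+\log h'=f$ (indeed only the real parts need agree), and then set $G=\operatorname{Re} h$.

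Next I would solve this equation by the substitution $u=e^{h}$. Writing the equation as $h'=e^{f}e^{-h}$ and noting $u'=h'e^{h}=h'u$, it reduces to the remarkably simple $u'=e^{f}$. Because $\mathscr{U}$ is simply connected and $e^{f}$ is holomorphic (and nowhere zero), it admits a holomorphic primitive $\Phi$ there; set $u=\Phi+C$ for a constant $C$. Since $\Phi'=e^{f}$ never vanishes, $u$ is nonconstant and its zeros are isolated, so I can choose an open disk $\mathscr{V}\subseteq\mathscr{U}$ on which $u$ has no zero. A disk is simply connected, so a branch $h=\log u$ is well defined on $\mathscr{V}$; put $G=\operatorname{Re} h$, which is harmonic.

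It then remains to verify the identity, which is clean at the level of moduli and so sidesteps all branch-of-logarithm bookkeeping: from $h'=u'/u=e^{f}/u$ we get
\[
G+\log\|\nabla G\|=\log|u|+\log|h'|=\log|u|+\log\frac{|e^{f}|}{|u|}=\log|e^{f}|=\operatorname{Re} f=F .
\]
The one genuine obstacle is ensuring the two logarithms make sense: we need $u\neq 0$ so that $h=\log u$ exists, and $h'\neq 0$ so that $\log\|\nabla G\|$ is finite. The second is automatic, since $e^{f}$ is nowhere zero and $u$ is finite and nonzero on $\mathscr{V}$; the first is exactly what forces us to shrink the domain to a disk avoiding the isolated zeros of $u$, which is why the conclusion is stated only on a simply connected subset $\mathscr{V}$ rather than on all of $\mathscr{U}$.
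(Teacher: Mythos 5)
Your proposal is correct and is essentially the paper's own argument: the paper also passes to a holomorphic $f$ with $F=\operatorname{Re}f$, solves $g+\log g'=f$ by arriving at $g(z)=\log\bigl(\int_{z_0}^{z}e^{f(\xi)}\,d\xi+C\bigr)$ (your $h=\log u$ with $u'=e^{f}$), shrinks to a simply connected $\mathscr{V}$ where the argument of the logarithm is nonvanishing, and uses the Cauchy--Riemann equations to identify $|g'|$ with $\|\nabla G\|$. Your write-up is, if anything, slightly more careful on the two points the paper glosses over (isolation of the zeros of $u$, and the nonvanishing of $h'$ needed for $\log\|\nabla G\|$ to be finite).
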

\begin{proof} Let $f$ be a complex analytic function on $\mathscr{U}$ and $F=Re(f)$. If we can find a complex analytic function $g$ on 
$\mathscr{V}$ such that

\begin{equation} \label{complexversion}
f(z)=g(z) + \log (g^{\prime}(z)), ~~z=(x,y)\in \mathscr{V},
\end{equation}
then taking the real parts of the functions in equation
\eqref{complexversion} gives

$$
F(x,y) = G(x,y) + \log | g^{\prime}(z) |, \rm{~~where~~ 
\textit{G}=Re(\textit{g})}.
$$
But since $g$ is complex analytic, there exists a harmonic function $H$ such that 
$$ g(z) = G(x,y) + i H(x,y). $$
We can write
$$ g^{\prime}(z) = \frac{\partial G}{\partial x} + i \frac{\partial H}{\partial x}. $$
and by\textit{ the Cauchy-Riemann equations} (``CR"), we have
$$ g^{\prime}(z) = \frac{\partial G}{\partial x} - i \frac{\partial G}{\partial y},  $$
which implies that
$$ | g^{\prime}(z) | = \left[ \left(\frac{\partial G}{\partial x}\right)^2 + \left(\frac{\partial G}{\partial y}\right)^2 \right]^{1/2} = \| \nabla G \|_0. $$
It remains to solve (\ref{complexversion}) for $g(z)$.  Elementary manipulations yield

\begin{eqnarray}\label{equation for g}
g(z) = \log \left( \int_{z_0}^z {\rm e}^{f(\xi)} d \xi + C \right), ~~z_0\in \mathscr{U}.  
\end{eqnarray}
Since $\displaystyle{{\rm e}^f}$ is analytic, the integral in \eqref{equation for g} is well-defined -- but it may be unbounded.  By shrinking $\mathscr{U}$ somewhat and considering a smaller open and simply connected $\mathscr{V}\subseteq \mathscr{U}$, we can pick $C$ so that $g$ is not zero (so that log is defined) in a neighborhood of $z_0\in \mathscr{V}$.
\end{proof}

We are now in a position to constructively prove the following special case of Minding's theorem, 
which constructively establishes that 
$(\R^2,g_\varphi)$ and $(\R^2,g_0)$ are locally isometric.  

\begin{theorem}\label{W}
If $p\in (\R^2,g_0)$, then there exists a simply connected open neighborhood $\mathscr{V}$ of $p$
and an isometry 
$W: \mathscr{V} \rightarrow (\mathbb{R}^2,g_\varphi) $ given by
$$ W(x,y) = \left( {\rm e}^{\Phi} \cos \Psi, {\rm e}^{\Phi} \sin \Psi \right), $$
where $\Phi$ is the harmonic solution on $\mathscr{V}$ to
$$ 
\Phi + \log \| \nabla \Phi \|_0 = \varphi, \rm{~~by~~ Lemma~~ \ref{firstlemma}}, 
$$
and $\Psi$ is a harmonic conjugate to $\Phi$.
\end{theorem}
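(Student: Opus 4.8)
The plan is to read the real pair $W=(\mathrm{e}^{\Phi}\cos\Psi,\mathrm{e}^{\Phi}\sin\Psi)$ as a single complex analytic function and then reduce the isometry claim to a one-line conformal-factor computation. Identifying $\R^2$ with $\C$ through $z=x+iy$ (and writing $(u,v)$ for the coordinates on the target plane), I would first note that, because $\Phi$ is harmonic on $\mathscr{V}$ and $\Psi$ is a harmonic conjugate of $\Phi$, the function $g=\Phi+i\Psi$ is analytic on $\mathscr{V}$. Hence
\[
W(z)=\mathrm{e}^{\Phi}\bigl(\cos\Psi+i\sin\Psi\bigr)=\mathrm{e}^{\Phi+i\Psi}=\mathrm{e}^{g(z)} .
\]
The harmonic $\Phi$ solving $\Phi+\log\|\nabla\Phi\|_0=\varphi$ is supplied by Lemma~\ref{firstlemma} (applied with $F=\varphi$ and $G=\Phi$), and the conjugate $\Psi$ exists because $\mathscr{V}$ is simply connected; so $W$ is a well-defined analytic map.

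Next I would compute the derivative. Since $W=\mathrm{e}^{g}$, the chain rule gives $W'(z)=g'(z)\,\mathrm{e}^{g(z)}$, and therefore
\[
|W'(z)|=|g'(z)|\,\bigl|\mathrm{e}^{g(z)}\bigr|=|g'(z)|\,\mathrm{e}^{\operatorname{Re} g(z)}=\|\nabla\Phi\|_0\,\mathrm{e}^{\Phi},
\]
where I have used the Cauchy--Riemann identity $|g'(z)|=\|\nabla\Phi\|_0$ already established inside the proof of Lemma~\ref{firstlemma} (there with $G=\operatorname{Re} g$, here $G=\Phi$). Taking logarithms and invoking the defining equation for $\Phi$ then yields
\[
\log|W'(z)|=\Phi+\log\|\nabla\Phi\|_0=\varphi,\qquad\text{so}\qquad |W'(z)|^2=\mathrm{e}^{2\varphi}.
\]

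The heart of the argument is then the elementary fact that an analytic map is conformal and scales the Euclidean line element by $|W'|^2$: for $W$ holomorphic one has $W^{\ast}(du^2+dv^2)=|W'(z)|^2\,(dx^2+dy^2)$. Substituting $|W'|^2=\mathrm{e}^{2\varphi}$ gives
\[
W^{\ast}(du^2+dv^2)=\mathrm{e}^{2\varphi}(dx^2+dy^2)=g_{\varphi},
\]
i.e. $W$ matches the conformal metric $g_\varphi$ with the flat Euclidean metric. This is exactly the statement that $W$ is a local isometry relating $(\R^2,g_\varphi)$ and $(\R^2,g_0)$, which is what we set out to prove.

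I expect the only real obstacle to be the local-analytic bookkeeping needed to upgrade this conformal matching to a genuine isometry of open sets. Because $|W'|=\mathrm{e}^{\varphi}>0$ throughout $\mathscr{V}$, the derivative never vanishes, so $W$ is a local diffeomorphism by the inverse function theorem; after shrinking $\mathscr{V}$ to a sufficiently small simply connected neighborhood of $p$ one also secures injectivity, making $W$ a diffeomorphism onto its open image and hence an honest isometry. The remaining point to check is that the solution $\Phi$ from Lemma~\ref{firstlemma}, its harmonic conjugate $\Psi$, and the nonvanishing of $W'$ are all available simultaneously on one common simply connected $\mathscr{V}$ --- precisely the reason the theorem is stated locally and why Lemma~\ref{firstlemma} was formulated with the freedom to pass to a smaller domain.
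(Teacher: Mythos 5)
Your argument is correct, and it reaches the paper's conclusion ($W^{*}g_{0}=e^{2\varphi}g_{0}=g_{\varphi}$, so $W$ matches the flat and conformal metrics) by a genuinely different route. The paper works entirely in real terms: it computes the push-forwards $W_{*}(\partial/\partial x)$ and $W_{*}(\partial/\partial y)$ component by component, substitutes the Cauchy--Riemann relations $\Psi_{x}=-\Phi_{y}$, $\Psi_{y}=\Phi_{x}$, and then grinds through the trigonometric expansions to verify that the Gram matrix is $e^{2\Phi}\|\nabla\Phi\|_{0}^{2}\,I=e^{2\varphi}I$. You instead observe that $W$ is the holomorphic map $e^{g}$ with $g=\Phi+i\Psi$, compute $|W'|=|g'|e^{\Phi}=\|\nabla\Phi\|_{0}\,e^{\Phi}=e^{\varphi}$ from the defining equation of $\Phi$, and invoke the standard conformality identity $W^{*}(du^{2}+dv^{2})=|W'|^{2}(dx^{2}+dy^{2})$ for analytic maps --- which is exactly the content of the paper's hand computation, stated once and for all. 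Your version buys brevity and makes the role of complex analysis transparent; it also explicitly addresses a point the paper leaves implicit, namely that $|W'|=e^{\varphi}>0$ forces $W$ to be a local diffeomorphism (and injective after shrinking $\mathscr{V}$), so that the conformal matching really is an isometry of open sets. The paper's version buys self-containedness: it never needs the general fact about holomorphic maps being conformal, only the Cauchy--Riemann equations and elementary algebra. The one thing to keep an eye on in your write-up is that the nonvanishing of $\nabla\Phi$ (needed both for $\log\|\nabla\Phi\|_{0}$ and for $|W'|>0$) comes from the construction in Lemma \ref{firstlemma}, where $g'=e^{f}/\bigl(\int e^{f}+C\bigr)$ never vanishes; you use this implicitly and it is available on the shrunken $\mathscr{V}$, so there is no gap.
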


\begin{proof} 
Let $<,>$ denote the standard inner product used to define the metric $g_0$:  

\begin{eqnarray*}\label{metric}
g_0=\left( 
  	\begin{array}{cc}
  		\left< \frac{\partial}{\partial x},\frac{\partial}{\partial x}\right> & \left< \frac{\partial}{\partial x},\frac{\partial}{\partial y}\right>  \\      
  		\left< \frac{\partial}{\partial y},\frac{\partial}{\partial x}\right>& \left< \frac{\partial}{\partial y},\frac{\partial}{\partial y}\right>  \\    
  	 \end{array}  
  \right)
=\left( 
  	\begin{array}{cc}
  		1 & 0  \\      
  		0 & 1  \\    
  	 \end{array}  
  \right).
\end{eqnarray*}

\noindent

To establish that $W$ is an isometry we need only show that the matrix of inner products that defines 
$g_\varphi$, namely

$$
g_\varphi=e^{2\varphi}\left( 
  	\begin{array}{cc}
  		\left< \frac{\partial}{\partial x},\frac{\partial}{\partial x}\right> & \left< \frac{\partial}{\partial x},\frac{\partial}{\partial y}\right>  \\      
  		\left< \frac{\partial}{\partial y},\frac{\partial}{\partial x}\right>& \left< \frac{\partial}{\partial y},\frac{\partial}{\partial y}\right>  \\    
  	 \end{array}  
  \right)
=\left( 
  	\begin{array}{cc}
  		e^{2\varphi} & 0  \\      
  		0 & e^{2\varphi}  \\    
  	 \end{array}  
  \right),
$$
is given by $W_*$, the standard push-forward map defined by the total derivative of $W$.  That is, we must show 

$$
\left\langle W_{\ast} \left( \frac{\partial}{\partial x} \right), W_{\ast} \left( \frac{\partial}{\partial x} \right) \right\rangle   = {\rm e}^{2 \varphi}=\left\langle W_{\ast} \left( \frac{\partial}{\partial x} \right), W_{\ast} \left( \frac{\partial}{\partial x} \right) \right\rangle, 
$$
and that 
$$
\left\langle W_{\ast} \left( \frac{\partial}{\partial x} \right), W_{\ast} \left( \frac{\partial}{\partial y} \right) \right\rangle =0.
$$

Computing, we have
\begin{eqnarray*}
W_{\ast} \left( \frac{\partial}{\partial x} \right) &=&
\left( {\rm e}^{\Phi} \Phi_x \cos \Psi - {\rm e}^{\Phi} \sin \Psi \Psi_x, {\rm e}^{\Phi} \Phi_x \sin \Psi + {\rm e}^{\Phi} \cos \Psi \Psi_x \right)                                               \\
&=& {\rm e}^{\Phi} \left( \Phi_x \cos \Psi + \Phi_y \sin \Psi, \Phi_x \sin \Psi - \Phi_y \cos \Psi \right), \mbox{ using CR.}
\end{eqnarray*}
So,
\begin{align*}
\left\langle W_{\ast} \left( \frac{\partial}{\partial x} \right), W_{\ast} \left( \frac{\partial}{\partial x} \right) \right\rangle  &=
{\rm e}^{2\Phi} \bigl[ \Phi_x^2 \cos^2 \Psi + 2 \Phi_x \Phi_y \cos \Psi \sin \Psi + \Phi_y^2 \sin^2 \Psi  \\
                      & \qquad  + \Phi_x^2 \sin^2 \Phi - 2 \Phi_x \Phi_y \sin \Psi \cos \Psi + \Phi_y^2 \cos^2 \Psi \bigr] \\
                      & = {\rm e}^{2 \Phi} \left[ \Phi_x^2 + \Phi_y^2 \right] \\
                      & = {\rm e}^{2 \Phi} \| \nabla \Phi \|_0^2 \\
                      & = {\rm e}^{2 \varphi}, \mbox{ by the construction from Lemma \ref{firstlemma}. }
\end{align*}
A similar calculation gives
$$ \left\langle W_{\ast} \left( \frac{\partial}{\partial y} \right), W_{\ast} \left( \frac{\partial}{\partial y} \right) \right\rangle =
{\rm e}^{2 \varphi}. $$
Finally,
\begin{align*}
\left\langle W_{\ast} \left( \frac{\partial}{\partial x} \right), W_{\ast} \left( \frac{\partial}{\partial y} \right) \right\rangle  &=
{\rm e}^{2 \Phi} [ ( \Phi_x \cos \Psi + \Phi_y \sin \Psi) (\Phi_y \cos \Psi - \Phi_x \sin \Psi ) \\
                      &  \qquad + ( \Phi_x \sin \Psi - \Phi_y \cos \Psi ) ( \Phi_y \sin \Psi + \Phi_x \cos \Psi ) ] \\
                      & = {\rm e}^{2 \Phi} [ \Phi_x \Phi_y \cos^2 \Psi - \Phi_x^2 \cos \Psi \sin \Psi + \Phi_y^2 \sin \Psi \cos \Psi \\
                      & \qquad - \Phi_y \Phi_x \sin^2 \Psi + \Phi_x \Phi_y \sin^2 \Psi + \Phi_x^2 \sin \Psi \cos \Psi \\
                      & \qquad - \Phi_y^2 \cos \Psi \sin \Psi - \Phi_y \Phi_x \cos^2 \Psi ] \\
                      & = 0.
\end{align*}
This completes the proof of the theorem.
\end{proof}

\noindent
\textit{Remark:}  This proof does not extend to dimensions greater than two.  
Even so, it is true that a flat metric on $\mathbb{R}^n$ is locally isometric to the standard metric $\mathbb{R}^n$.  

\bigskip
\noindent
\textbf{Example 1:}  Although Theorem \ref{W} provides an explicit isometry $W$ between  $(\R^2,g_0)$ and $(\R^2,g_\varphi)$
the integral in \eqref{equation for g} is generally not tractable -- unless $f$ is rather simple.  Consider the identity function  $f(z)=z=x+iy$.  Define $g_\varphi=e^{2\varphi}g_0$, where $\varphi= Re(f)=x$. In this case, $W(x,y)=(e^x \cos{y}, e^x \sin{y}). $

\end{document}